\documentclass[12pt,reqno]{amsart}

\usepackage{amsmath,amssymb,amsthm}

\usepackage{graphicx}

\usepackage{subcaption} %para facer a artistada coas figuras nun grid
\usepackage{adjustbox}
\usepackage{tikz-cd}
\usepackage{hyperref}
\hypersetup{
	unicode=true,
	colorlinks=true,
	citecolor=black,
	linkcolor=black,
	urlcolor=black,
	plainpages=false,
 }

 \usepackage{url}	%url adresses
 \allowdisplaybreaks %align may cross pages

\usepackage{cancel}
\usepackage{pgf}

\usepackage{xcolor}

\usepackage{comment} % para comentar trozos grandes de codigo

\newtheorem{teo}{Theorem}[section]
\newtheorem{lemma}[teo]{Lemma}

\theoremstyle{definition}
\newtheorem{defi}[teo]{Definition}
\newtheorem{example}[teo]{Example}
\newtheorem{remark}[teo]{Remark}

\theoremstyle{remark}

\numberwithin{figure}{section}%figure numbering

\begin{document}

\title[Morse-Bott inequalities for endomorphisms]{Morse-Bott inequalities for endomorphisms}

\author[E. Mac\'ias]{%
	Enrique Mac\'ias-Virg\'os %etc.
}

 \address{%
           	E. Macías-Virgós\\
              CITMAga, Departamento de Matem\'aticas, Universidade de Santiago de Compostela, SPAIN}
               \email{quique.macias@usc.es}

\author[A.\ O. Majadas-Moure]{%
	Alejandro O. Majadas-Moure %etc.
}

\address{
		 Alejandro O. Majadas-Moure \\
		 Departamento de Matemáticas, Universidade de Santiago de Compostela, SPAIN}
		\email{alejandro.majadas@usc.es}

\author[D. Mosquera]{%
	David Mosquera-Lois %etc.
}

 \address{%
           	David Mosquera-Lois \\
              Departamento de Matemáticas, Universidade de Vigo, SPAIN}
               \email{david.mosquera.lois@uvigo.gal} 

\author[J.\ A. Vilches]{José Antonio Vilches}
\address{%
           	José Antonio Vilches \\
              Departamento de Geometría y Topología, Universidad de Sevilla, SPAIN}
               \email{vilches@us.es}

%%==================================%%
%% sample for unstructured abstract %%
%%==================================%%

\begin{abstract} Let $K$ be a finite simplicial complex, let $g\colon K\to K$ be a simplicial map and let $f$ be a discrete Morse-Bott function on $K$ satisfying $f(g(\sigma))\leq f(\sigma)$ for all simplices $\sigma$ in $K$. We establish a set of inequalities (generalizing the Morse-Bott inequalities which we recover as a particular case when $g$ is the identity) relating the dynamics of $g$ and $f$.
\end{abstract}

\keywords{Discrete Morse-Bott functions, Lefschetz number}

%%\pacs[JEL Classification]{D8, H51}

\subjclass{57Q70}

%\pacs[MSC Classification]{35A01, 65L10, 65L12, 65L20, 65L70}

\thanks{The authors were partially supported by MICINN-Spain research project PID2020-114474GB-I00. The last author was additionally partially supported by MICINN-Spain research project PID2023-149804NB-I00 and by Junta de Andalucía research project PROYEXCEL-00827. The authors thank the anonymous referees for their careful reading of the manuscript and for their comments.}

\maketitle

%\tableofcontents

\section*{Introduction}\label{sec:intro}
 
Both Morse theory and the Lefschetz number establish connections between Topology and Dynamics. Morse theory, when applicable, offers a more refined relationship and provides additional information about critical points. However, the Lefschetz number applies in a broader range of settings. E. Pitcher (see \cite{Pitcher}) posed the question of extending the Lefschetz fixed point formula to provide additional information or broadening the applicability of Morse theory. He stated a result \cite[Theorem 2]{Pitcher} addressing this issue in the setting of smooth manifolds and Morse dynamics. In this short paper we further investigate this question and extend Pitcher's statement in several directions which we elaborate on after fixing some notation. Let $K$ be a finite simplicial complex, let $g\colon K \to K$ be a simplicial map and let $f\colon K\to \{0,1,\ldots, m\}$ be a Forman-Morse-Bott function (see Definition \ref{defi:Forman-Morse-Bott}) satisfying $f(g(\sigma))\leq f(\sigma)$ for every simplex $\sigma$. We refer the reader to \cite{Forman,Knudson,Scoville} for discrete Morse theory, to \cite{Forman_Bott} for the foundational paper on discrete Morse-Bott theory on simplicial complexes, to \cite{SQDJ} for discrete Morse-Bott theory on posets, and to \cite{DML} for a detailed treatment of discrete Morse-Bott theory in both contexts.

Our result (Theorem \ref{thm:teo}) compares to Pitcher's statement as follows. First, we work in the setting of discrete Morse-Bott theory on simplicial complexes. Therefore, we state and prove a result not only for discrete Morse functions but also for Morse-Bott dynamics, dealing with general combinatorial vector fields (not restricted to gradient vector fields).  Second, our statement involves not only the map $g\colon K \to K$, but also its powers $g^l\colon K \to K$ (composing it with itself $l$ times for $l\geq 1$), providing, in some sense, which we elaborate on below, a measure of the persistence of $g$. Third, we provide full proofs and a fully worked-out example.

These results have natural applications in discrete dynamical systems where the simplicial map $g$ models a timestep evolution. When $g$ is compatible with a Forman-Morse-Bott function $f$, the inequalities provide topological constraints on the dynamics, quantifying the persistence of features across iterations and offering a localized description of system stability from the viewpoint of Morse-Bott theory.

%\subsection*{Funding} REVISAR The authors were partially supported by  Xunta de Galicia ED431C 2023/31 with FEDER funds and by MICINN-Spain research project PID2020-114474GB-I00.

%\subsection*{Acknowledgements} The author thanks Nicholas A. Scoville and Kevin P. Knudson for enlightening discussions regarding the topic of the paper.

\section{The result}

\subsection{Notation and a technical lemma} We begin by introducing some notation for the rest of the paper. Recall that an ordered simplicial complex is a simplicial complex equipped with a total order on its vertices. Let $K$ be an ordered finite simplicial complex of dimension $n$ and let $g\colon K\to K$ be a simplicial map which preserves the order: if $v_i\le v_j$, then  $g(v_i)\le g(v_j)$.  
We will consider a subcomplex  $L$  of $K$ (with the induced order) such that $g(L)\subseteq L$ and we will consider a subcomplex $J$  of $L$ (with the induced order) such that $g(J)\subseteq J$. (Relative) Homology will always be considered with rational coefficients and $H_k(g)_{(L,J)}$  will denote the morphism $H_k(g)\colon H_k{(L,J)}\to H_k{(L,J)}$. We will denote by $\mathrm{Tr\,}H_k(g)$ and by $\mathrm{Im\,}H_k(g)$ the trace and the image, respectively,  of the homomorphism $H_k(g)$.

\begin{lemma}\label{lemma:key}
 For every integer $j\geq 0$:
 \begin{align*}
     \sum_{k=0}^j(-1)^{j-k}\mathrm{Tr\,}H_k(g)_{(K,J)}\leq& \sum_{k=0}^j(-1)^{j-k}\mathrm{Tr\,}H_k(g)_{(K,L)}\\ +& \sum_{k=0}^j(-1)^{j-k}\mathrm{Tr\,}H_k(g)_{(L,J)}
 \end{align*}
Moreover, the equality holds for $j=n$.
\end{lemma}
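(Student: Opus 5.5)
The plan is to derive the inequality from the long exact homology sequence of the triple $(K,L,J)$. Since $g(J)\subseteq J$ and $g(L)\subseteq L$, the map $g$ induces an endomorphism of this sequence: it commutes with the inclusion-induced maps and, by naturality, with the connecting homomorphism. So we have a commutative ladder
\[
\cdots\to H_{j+1}(K,L)\xrightarrow{\partial_{j+1}} H_j(L,J)\to H_j(K,J)\to H_j(K,L)\xrightarrow{\partial_j} H_{j-1}(L,J)\to\cdots\to H_0(K,L)\to 0,
\]
with $H_k(g)$ acting on every term. All spaces are finite-dimensional $\mathbb{Q}$-vector spaces because $K$ is finite.

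\textbf{Step 1 (truncation).} Set $I_j:=\mathrm{Im\,}\partial_{j+1}\subseteq H_j(L,J)$. By naturality $I_j$ is $H_j(g)$-invariant. Truncating the sequence at $H_j(L,J)$ gives an exact sequence
\[
0\to I_j\to H_j(L,J)\to H_j(K,J)\to H_j(K,L)\to\cdots\to H_0(K,L)\to 0 .
\]
It carries compatible endomorphisms, with $g$ acting on $I_j$ by restriction.

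\textbf{Step 2 (alternating traces).} I will use the standard fact that for a finite exact sequence of finite-dimensional spaces with a compatible endomorphism, the alternating sum of traces vanishes. This follows by splitting into short exact sequences $0\to Z\to A\to B\to0$, where $\mathrm{Tr}_A=\mathrm{Tr}_Z+\mathrm{Tr}_B$ in an adapted basis. The terms of the truncated sequence appear in the order $(L,J),(K,J),(K,L)$ for $k=j,j-1,\dots,0$, and their signs, measured from $H_j(L,J)$, are exactly $+,-,+$ times $(-1)^{j-k}$. Collecting them gives
\[
\sum_{k=0}^j(-1)^{j-k}\bigl(\mathrm{Tr\,}H_k(g)_{(K,L)}+\mathrm{Tr\,}H_k(g)_{(L,J)}-\mathrm{Tr\,}H_k(g)_{(K,J)}\bigr)=\mathrm{Tr}\bigl(H_j(g)|_{I_j}\bigr).
\]
The claimed inequality is therefore equivalent to $\mathrm{Tr}(H_j(g)|_{I_j})\ge 0$.

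\textbf{Step 3 (the case $j=n$).} This is immediate. Since $\dim K=n$, we have $H_{n+1}(K,L)=0$, so $I_n=0$ and equality holds.

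\textbf{Step 4 (nonnegativity, the main obstacle).} This is where the hypothesis that $g$ preserves the vertex order must be used. With ordered simplices as the chain basis, $g_\#$ sends each basis simplex either to $0$ (if it collapses) or to another basis simplex with sign $+1$. So $g_\#$ is a $0/1$ matrix and has nonnegative trace on the chain groups and on the relative chain groups.

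I would first try to realize $I_j$ at the chain level as the $g$-invariant quotient $B_j(L,J)'/B_j(L,J)$, where $B_j(L,J)'$ consists of relative $j$-cycles of $(L,J)$ that bound in $(K,J)$. The aim is then to compute its trace via a Hopf-trace-type argument, splitting off cycles and boundaries as in Step 2, until the trace is expressed in terms of chain-level traces with a controlled sign.

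I expect this step to be genuinely delicate. Traces on invariant subquotients of a nonnegative matrix need not be nonnegative, since cancellation between cycles and boundaries can occur. If the direct approach fails, I would look for an additional structural property of order-preserving simplicial maps, for example that $g$ acts on $I_j$ as an eventually idempotent or permutation-like map, which forces a nonnegative trace.
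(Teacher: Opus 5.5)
Your Steps 1--3 are correct and are essentially the paper's argument. The paper obtains the same identity (its Equation (6)) by chaining short exact ``bricks'' from the long exact sequence of the triple. It thereby reduces the lemma to $\mathrm{Tr}\bigl(H_j(g)|_{I_j}\bigr)\ge 0$ with $I_j=\mathrm{Im\,}\partial_{j+1}$, and gets equality at $j=n$ from $I_n=0$.

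The gap is Step 4. It is the only non-formal part of the lemma, and you leave it unproved. Your main line of attack there cannot work by itself. The fact that $g_\#$ is a $0/1$ matrix in the ordered-simplex basis gives no control over traces on invariant subquotients. For example, the $0/1$ matrix $\begin{pmatrix}0&1\\1&0\end{pmatrix}$ has the invariant line spanned by $e_1-e_2$, on which it acts by $-1$. So no bookkeeping of cycles and boundaries based only on nonnegative entries will produce a sign. What excludes such behaviour is a dynamical property of order-preserving maps, not positivity of matrix entries.

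The missing idea is the one you mention only in passing, and it has to be proved, not assumed: an order-preserving self-map of a finite totally ordered set is eventually idempotent.
\begin{itemize}
\item If $g(v)\ge v$, monotonicity gives $g^2(v)\ge g(v)$. Inductively the orbit $v,g(v),g^2(v),\dots$ is nondecreasing, hence eventually constant in the finite vertex set. The case $g(v)\le v$ is symmetric.
\item Taking the maximal stabilization time gives an $N$ with $g^{N+1}=g^N$ on all vertices.
\item Since $(g_\#)^r=(g^r)_\#$ on ordered chains (degenerate images go to $0$, and all signs are $+$ because $g$ preserves the order), this yields $g_\#^{N+1}=g_\#^N$ on $C_*(L,J)$.
\item Hence $H_j(g)^{N+1}=H_j(g)^N$ on $H_j(L,J)$. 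By invariance of $I_j$, also $T^{N+1}=T^N$ for $T=H_j(g)|_{I_j}$.
\item Every eigenvalue $\lambda$ of $T$ therefore satisfies $\lambda^N(\lambda-1)=0$. So $\mathrm{Tr}(T)$ equals the algebraic multiplicity of the eigenvalue $1$, which is $\ge 0$.
\end{itemize}
This is exactly how the paper closes the argument. Without it, your proposal establishes only the trace identity and the equality case $j=n$, not the inequality itself.
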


\begin{proof}
We begin by applying the naturality of the Long Exact Sequence in homology of the triple $(K,L,J)$ so the following diagram is commutative:
$$\begin{tikzcd}[column sep=small]
        \cdots \arrow[r] & H_j(L,J) \arrow[r,"\alpha_j"] \arrow[d, "H_j(g)_{(L,J)}"] & H_j(K,J) \arrow[r, "\beta_j"] \arrow[d, "H_j(g)_{(K,J)}"] & H_j(K, L) \arrow[r, "\eta_j"] \arrow[d, "H_j(g)_{(K,L)}"] & H_{j-1}(L,J) \arrow[r] \arrow[d, "H_{j-1}(g)_{(L,J)}"] & \cdots \\
        \cdots \arrow[r] & H_j(L,J) \arrow[r, "\alpha_j"] & H_j(K, J) \arrow[r, "\beta_j"] & H_j(K, L) \arrow[r, "\eta_j"] & H_{j-1}(L,J) \arrow[r] & \cdots 
    \end{tikzcd}$$

The exactness of the rows guarantees that the following diagram, which we will refer to as the brick for $H_j(g)_{(L,J)}$, is commutative:
\begin{equation*}
\begin{tikzcd}[column sep=small]
        0 \arrow[r] & \mathrm{Im\,} \eta_{j+1} \arrow[r] \arrow[d, "H_j(g)_{(L,J)_{|\mathrm{Im\,} \eta_{j+1}}}", swap]& H_j(L,J) \arrow[r,"\alpha_j"] \arrow[d, "H_j(g)_{(L,J)}"] & \mathrm{Im\,} \alpha_{j} \arrow[d, "H_j(g)_{(K,J)_{|\mathrm{Im\,} \alpha_{j}}}"] \arrow[r] & 0 \\
        0 \arrow[r] & \mathrm{Im\,} \eta_{j+1} \arrow[r] & H_j(L,J) \arrow[r, "\alpha_j"] & \mathrm{Im\,} \alpha_{j} \arrow[r] & 0 
    \end{tikzcd}   
\end{equation*}
    Therefore, 
    \begin{equation}\label{eq:1}
        \mathrm{Tr\,}H_j(g)_{(L,J)_{|\mathrm{Im\,} \eta_{j+1}}}=\mathrm{Tr\,}H_j(g)_{(L,J)}-\mathrm{Tr\,}H_j(g)_{(K,J)_{|\mathrm{Im\,} \alpha_{j}}}
    \end{equation}
From the exactness of the rows we also obtain that the following diagram (the brick for $H_j(g)_{(K,J)}$) is commutative:   
\begin{equation*}
\begin{tikzcd}[column sep=small]
        0 \arrow[r] & \mathrm{Im\,} \alpha_{j} \arrow[r] \arrow[d, "H_j(g)_{(K,J)_{|\mathrm{Im\,} \alpha_{j}}}", swap]& H_j(K,J) \arrow[r,"\beta_j"] \arrow[d, "H_j(g)_{(K,J)}"] & \mathrm{Im\,} \beta_j \arrow[d, "H_j(g)_{(K,L)_{|\mathrm{Im\,} \beta_j}}"] \arrow[r] & 0 \\
        0 \arrow[r] & \mathrm{Im\,} \alpha_{j} \arrow[r] & H_j(K,J) \arrow[r, "\beta_j"] & \mathrm{Im\,} \beta_j \arrow[r] & 0 
    \end{tikzcd}   
\end{equation*}

Hence,
\begin{equation}\label{eq:2}
        \mathrm{Tr\,}H_j(g)_{(K,J)_{|\mathrm{Im\,} \alpha_{j}}}=\mathrm{Tr\,}H_j(g)_{(K,J)}-\mathrm{Tr\,}H_j(g)_{(K,L)_{|\mathrm{Im\,} \beta_{j}}}.
    \end{equation}
The same reasoning applied to the brick for $H_j(g)_{(K,L)}$ gives:
\begin{equation}\label{eq:3}
        \mathrm{Tr\,}H_j(g)_{(K,L)_{|\mathrm{Im\,} \beta_{j}}}=\mathrm{Tr\,}H_j(g)_{(K,L)}-\mathrm{Tr\,}H_{j-1}(g)_{(L,J)_{|\mathrm{Im\,} \eta_{j}}}.
    \end{equation}
Combining Equations \eqref{eq:1} and \eqref{eq:2} we obtain:
\begin{align}\label{eq:4}
        \mathrm{Tr\,}H_j(g)_{(L,J)_{|\mathrm{Im\,} \eta_{j+1}}}=&\mathrm{Tr\,}H_j(g)_{(L,J)}-\mathrm{Tr\,}H_j(g)_{(K,J)} \nonumber\\ 
        &+\mathrm{Tr\,}H_j(g)_{(K,L)_{|\mathrm{Im\,} \beta_{j}}}.
    \end{align}
Combining Equations \eqref{eq:3} and \eqref{eq:4} we obtain:
\begin{align}\label{eq:5}
        \mathrm{Tr\,}H_j(g)_{(L,J)_{|\mathrm{Im\,} \eta_{j+1}}}=&\mathrm{Tr\,}H_j(g)_{(L,J)}-\mathrm{Tr\,}H_j(g)_{(K,J)} \nonumber\\ 
        &+\mathrm{Tr\,}H_j(g)_{(K,L)}-\mathrm{Tr\,}H_{j-1}(g)_{(L,J)_{|\mathrm{Im\,} \eta_{j}}}.
    \end{align}

Iterating this argument for the bricks for $H_k(g)_{(L,J)}$, $H_k(g)_{(K,J)}$ and  $H_k(g)_{(K,L)}$ for each $k\leq j$ and then combining the resulting equations we get:
\begin{align}\label{eq:6}
        \mathrm{Tr\,}H_j(g)_{(L,J)_{|\mathrm{Im\,} \eta_{j+1}}}=& \sum_{k=0}^j(-1)^{j-k}\mathrm{Tr\,}H_k(g)_{(K,L)}\nonumber\\ -&\sum_{k=0}^j(-1)^{j-k}\mathrm{Tr\,}H_k(g)_{(K,J)} \\ +& \sum_{k=0}^j(-1)^{j-k}\mathrm{Tr\,}H_k(g)_{(L,J)}.\nonumber
    \end{align}
    To see this, we proceed by induction. The case $j=0$ is Equation~\eqref{eq:5}. Suppose now that the statement is true for $j$. Equation~\eqref{eq:5} for $j+1$ implies
\begin{align*}
        \mathrm{Tr\,}H_{j+1}(g)_{(L,J)_{|\mathrm{Im\,} \eta_{j+2}}}=&\mathrm{Tr\,}H_{j+1}(g)_{(L,J)}-\mathrm{Tr\,}H_{j+1}(g)_{(K,J)} \nonumber\\ 
        &+\mathrm{Tr\,}H_{j+1}(g)_{(K,L)}-\mathrm{Tr\,}H_{j}(g)_{(L,J)_{|\mathrm{Im\,} \eta_{j+1}}}.
    \end{align*}
If we replace the term $\mathrm{Tr\,}H_{j}(g)_{(L,J)_{|\mathrm{Im\,} \eta_{j+1}}}$ by its value in Equation~\eqref{eq:6}, we get
\begin{align*}
        \mathrm{Tr\,}H_{j+1}(g)_{(L,J)_{|\mathrm{Im\,} \eta_{j+2}}}=&\mathrm{Tr\,}H_{j+1}(g)_{(L,J)}-\mathrm{Tr\,}H_{j+1}(g)_{(K,J)} \nonumber\\ 
        &+\mathrm{Tr\,}H_{j+1}(g)_{(K,L)}\\-&\bigg(\sum_{k=0}^j(-1)^{j-k}\mathrm{Tr\,}H_k(g)_{(K,L)}\\ -&\sum_{k=0}^j(-1)^{j-k}\mathrm{Tr\,}H_k(g)_{(K,J)} \\ + &\sum_{k=0}^j(-1)^{j-k}\mathrm{Tr\,}H_k(g)_{(L,J)}\bigg)\\=&\sum_{k=0}^{j+1}(-1)^{j+1-k}\mathrm{Tr\,}H_k(g)_{(K,L)}\\ -&\sum_{k=0}^{j+1}(-1)^{j+1-k}\mathrm{Tr\,}H_k(g)_{(K,J)} \\ + &\sum_{k=0}^{j+1}(-1)^{j+1-k}\mathrm{Tr\,}H_k(g)_{(L,J)}.
    \end{align*}
    
Now observe that it is enough to prove that $\mathrm{Tr\,}H_j(g)_{(L,J)_{|\mathrm{Im\,} \eta_{j+1}}}\geq 0$, since this implies the desired inequality as a consequence of Equation~\eqref{eq:6}. The statement about the equality follows from the fact that $\mathrm{Tr\,}H_n(g)_{(L,J)_{|\mathrm{Im\,} \eta_{n+1}}}= 0$.

So, it only remains to prove that $\mathrm{Tr\,}H_j(g)_{(L,J)_{|\mathrm{Im\,} \eta_{j+1}}}\geq 0$. Since $g:K\to K$ preserves the total order on the vertices and $K$ is finite, the sequence $\{g^r(v)\}_{r\ge 0}$
stabilizes. Hence, for each vertex $v$ there exists $N(v)\in\mathbb{N}$ such that
$g^{N(v)+1}(v)=g^{N(v)}(v)$. Let $N$ be the maximum of the $N(v)\in\mathbb{N}$. Then $g^{N+1}(v)=g^N(v)$ for all vertices.

Now let $\sigma=[v_0<\cdots<v_p]$ be an ordered simplex of $L$ (and hence of $K$).
Because $g$ preserves the order, the induced chain map sends $\sigma$ either to
the ordered simplex $[g(v_0)<\cdots<g(v_p)]$ (if the images are pairwise distinct)
or to $0$ (if the dimension drops). Since $g^{N+1}=g^N$ on vertices, we obtain that the induced maps on chain complexes satisfy $g_\#^{\,N+1}(\sigma)=g_\#^{\,N}(\sigma)$ for every simplex $\sigma\subset L$, and therefore
$g_\#^{\,N+1}=g_\#^{\,N}$ on the relative chain complex $C_*(L,J)$.

Passing to homology, we deduce $H_k(g)^{N+1}=H_k(g)^{N}$ on $H_k(L,J)$ for all $k$. By commutativity of the long exact sequence, we have that  $H_j(g)_{(L,J)}(\mathrm{Im\,} \eta_{j+1})\subset \mathrm{Im\,} \eta_{j+1}$, hence the restriction
\[
T:=H_j(g)_{(L,J)_{|\mathrm{Im\,} \eta_{j+1}}}
\]
also satisfies $T^{N+1}=T^{N}$. Let $\lambda\in \mathbb{C}$ be an eigenvalue of $T$ and let $v\neq 0$ be a corresponding eigenvector. Then $T^{N}v=\lambda^{N}v$ and $T^{N+1}v=\lambda^{N+1}v$. Using $T^{N+1}=T^{N}$ we obtain $\lambda^{N+1}v=\lambda^{N}v$, hence $\lambda^{N}(\lambda-1)\,v=0$. Since $v\neq 0$, it follows that $\lambda^{N}(\lambda-1)=0$, and therefore $\lambda\in\{0,1\}$. 
Consequently,
\[
\mathrm{Tr\,}\Bigl(H_j(g)_{(L,J)_{|\mathrm{Im\,} \eta_{j+1}}}\Bigr)=\mathrm{Tr\,}(T)\ge 0,
\]
which is what remained to be proved.
\end{proof}

\subsection{Forman-Morse-Bott functions}
We introduce a special kind of function to encode the Morse-Bott dynamics on $K$. In order to do so, we fix some notation. For a subset $C$ of the simplices of $K$, $\overline{C}$ denotes the subcomplex generated by $C$, that is the subcomplex consisting of all simplices of $C$ and all their faces.

Let $f$ be a map from the set of simplices of $K$ to ${\{0,\dots,m\}}$ for some $m\in \mathbb{N}$.  For each $i\in\{0,\dots,m\}$ we define $K_i=\overline{\{\sigma\in K\, \colon f(\sigma)\leq i\}}$.

\begin{defi}\label{defi:Forman-Morse-Bott}
    A Forman-Morse-Bott function is a map $f$ from the set of simplices of $K$ to $\{0,1,\ldots,m\}$  (which we will write abusing notation as $f\colon K\to \{0,\dots, m\}$) for some $m\in \mathbb{N}$ satisfying the following conditions. For each $i\in \{0,1,\ldots,m-1\}$, there exists $p\in \mathbb{N}$ such that:\begin{enumerate}
        \item $H_k(K_{i+1},K_i)= 0$ for $k\neq p,p+1$.
        \item $K_{i+1}-K_i$ consists of three mutually exclusive possibilities:         
        \begin{enumerate}
            \item[(2a)] a  unique simplex $\sigma$ of dimension $p$ satisfying $H_{p+1}(K_{i+1},K_i)= 0$ and $H_p(K_{i+1},K_i)\cong \mathbb{Q}$.
            \item[(2b)] a set of simplices of dimensions $p$ and $p+1$ with at least one of dimension $p$ and at least one of dimension $p+1$. Moreover, exactly one of following three conditions hold: \begin{enumerate}
            \item $H_p(K_{i+1},K_i)= 0$ and  $H_{p+1}(K_{i+1},K_i)\cong \mathbb{Q}$.
            \item $H_{p}(K_{i+1},K_i)= \mathbb{Q}$ and $H_{p+1}(K_{i+1},K_i)\cong 0$. 
            \item $H_p(K_{i+1},K_i)= \mathbb{Q}$ and  $H_{p+1}(K_{i+1},K_i)\cong \mathbb{Q}$.
        \end{enumerate}  
            \item[(2c)] a pair of simplices: $\sigma$ of dimension $p$ and $\tau$ of dimension $p+1$ satisfying $H_{p+1}(K_{i+1},K_i)\cong H_{p}(K_{i+1},K_i) = 0$.
        \end{enumerate}  
    \end{enumerate}
\end{defi}

Recall the notion of a Morse-Smale combinatorial vector field on a simplicial complex $K$ from \cite{Forman_Bott}. Intuitively, it is a vector field on the simplices of $K$ whose critical set consists of isolated simplices or simple periodic orbits. Following the ideas of \cite[Theorem 3.11, Definition 3.13, Theorem 5.5]{DML}, it can be seen that a Liapunov or discrete Morse-Bott function (see \cite{Forman_Bott} for a definition) obtained by integrating a Morse-Smale combinatorial vector field on $K$ can be perturbed to be a Forman-Morse-Bott function on $K$. That is to say, one way to obtain a Forman-Morse-Bott function on $K$ is by integrating a Morse-Smale combinatorial vector field on $K$ (in the sense of \cite{Forman_Bott}).

\begin{remark}
    We provide some dynamical intuition for Definition \ref{defi:Forman-Morse-Bott}. The case (2a) models a critical point (simplex), the case (2b) models a critical periodic orbit and the (2c) models the gradient part of the vector field.
\end{remark}

\subsection{The theorem} We begin by introducing a measure of the contribution to the Lefschetz number at critical objects (that is, critical simplices and periodic orbits).

\begin{defi}
We define the local $k$-trace of $g\colon K\to K$ as
    $$\text{Local $k$-trace of $g$}=\sum_{i\geq -1}\mathrm{Tr} \, H_k(g)_{(K_{i+1},K_{i})}$$
    where, by convention, $K_{-1}=\emptyset$.
\end{defi}

Let $l\geq 1$. We denote $g^l=\underbrace{g \circ {\cdots} \circ g}_{\text{$l$ times}}$. 

\begin{teo}[Strong and Weak inequalities]\label{thm:teo}
    Let $K$ be a finite ordered simplicial complex of dimension $n$ and let $g\colon K \to K$ be a simplicial map which preserves the total order of $K$. Let $f\colon K\to \{0,\dots,m\}$ be a Forman-Morse-Bott function that satisfies $f(g(\sigma))\leq f(\sigma)$ for every simplex $\sigma$ of $K$. For every  $l\geq 1$ and for every $0\leq j\leq n$  it holds that:
    
    \begin{equation}\label{eq:strong}
        \sum_{k=0}^{j}(-1)^{j-k}\mathrm{Tr} \, H_k(g^l) \leq \sum_{k=0}^{j}(-1)^{j-k}{\textnormal{Local $k$-trace of $g^l$}}
    \end{equation}
    and also the equality holds for $j=n.$
    Moreover, for every $l\geq 1$ and for every $0\leq k\leq n$:
    \begin{equation}\label{eq:weak}
        \mathrm{Tr} \, H_k(g^l) \leq \textnormal{Local $k$-trace of $g^l$}
    \end{equation}
\end{teo}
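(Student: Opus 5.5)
The plan is to apply Lemma~\ref{lemma:key} repeatedly along the filtration $\emptyset=K_{-1}\subseteq K_0\subseteq K_1\subseteq\cdots\subseteq K_m=K$ induced by $f$, with the map $g^l$ in place of $g$. Two preliminary checks are needed so the lemma applies.

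First, $g^l$ is again a simplicial map preserving the total order, since a composition of order-preserving simplicial maps is order-preserving. Second, each $K_i$ is $g^l$-invariant. Indeed, if $f(\sigma)\le i$ then $f(g(\sigma))\le f(\sigma)\le i$, and iterating gives $f(g^l(\sigma))\le i$. A face $\tau$ of such a $\sigma$ maps to a face $g^l(\tau)$ of $g^l(\sigma)\in K_i$, so $g^l(K_i)\subseteq K_i$ because $K_i$ is a subcomplex. Also $K_m=K$, since every simplex has $f$-value at most $m$. Thus every triple $(K_{i+1},K_i,J)$ with $J=K_{-1}=\emptyset$ satisfies the hypotheses of Lemma~\ref{lemma:key}, with $K_{i+1}$ playing the role of $K$ (its dimension is at most $n$).

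The main step is an induction on $i$. Write $S_j(A,B)=\sum_{k=0}^j(-1)^{j-k}\mathrm{Tr}\,H_k(g^l)_{(A,B)}$. The claim is
\[
S_j(K_{i},\emptyset)\le \sum_{r=-1}^{i-1} S_j(K_{r+1},K_r),
\]
with equality when $j=n$. For the base case $i=0$ the statement is trivially an equality. For the inductive step, apply Lemma~\ref{lemma:key} to the triple $(K_{i+1},K_i,\emptyset)$ to get
\[
S_j(K_{i+1},\emptyset)\le S_j(K_{i+1},K_i)+S_j(K_i,\emptyset),
\]
and then use the inductive hypothesis on the last term. The equality at $j=n$ propagates because the lemma gives equality at $j=n$ at every step; the equality for top index holds for any $j\ge\dim$, and $\dim K_{i+1}\le n$.

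Taking $i=m$ and exchanging the finite sums over $r$ and $k$ turns the right-hand side into $\sum_{k=0}^j(-1)^{j-k}\,(\text{Local $k$-trace of } g^l)$. This yields \eqref{eq:strong} together with equality at $j=n$. The one point to watch is that the indexing of the local trace uses $i\ge -1$ with $K_{-1}=\emptyset$, which matches the sum over $r$ above; terms with $K_{i+1}=K_i$ contribute zero.

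For the weak inequalities \eqref{eq:weak}, write $a_k=\text{Local $k$-trace of } g^l-\mathrm{Tr}\,H_k(g^l)$. The strong inequalities say that $T_j:=\sum_{k\le j}(-1)^{j-k}a_k\ge 0$ for all $j$. Then $a_j=T_j+T_{j-1}$ with $T_{-1}=0$, so $a_j\ge 0$. This is immediate and needs no further input.

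The step needing most care is verifying the hypotheses of the lemma, namely the invariance $g^l(K_i)\subseteq K_i$ via closure under faces. The rest is bookkeeping. The Forman-Morse-Bott structure itself is not needed beyond the filtration being by $g$-invariant subcomplexes ending at $K$.
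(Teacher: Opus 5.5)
Your proposal is correct and follows essentially the same route as the paper: you iterate Lemma~\ref{lemma:key} along the filtration $\emptyset=K_{-1}\subseteq K_0\subseteq\cdots\subseteq K_m=K$ (the paper peels off levels from the top and reduces to $l=1$, whereas you induct from the bottom and apply the lemma to $g^l$ directly, which amounts to the same thing), and you obtain the weak inequalities by adding two consecutive strong ones. Your explicit checks fill in details the paper leaves implicit: that $g^l$ is order-preserving, that $g^l(K_i)\subseteq K_i$, and that equality persists for $j\geq\dim K_{i+1}$ (all three sums just pick up the same sign $(-1)^{j-\dim K_{i+1}}$).
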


\begin{proof}
First of all, since, for all $l\geq 1$, $$f(g^{l+1}(\sigma))\leq f(g^{l}(\sigma)) \leq \cdots \leq f(g(\sigma))\leq f(\sigma)$$ for every simplex $\sigma$ of $K$, it is enough to prove the results for $l=1$.
We begin by proving Equation \eqref{eq:strong}. Consider the filtration:

$$\emptyset=K_{-1}\subseteq K_0 \subseteq K_1 \subseteq \cdots \subseteq K_i \subseteq K_{i+1} \subseteq \cdots \subseteq K_m=K$$

of $K$ induced by the Forman-Morse-Bott function 
$f\colon K\to \{0,\dots,m\}$. 
We apply Lemma \ref{lemma:key} inductively on the filtration:
 \begin{align*}
     \sum_{k=0}^j(-1)^{j-k}\mathrm{Tr\,}H_k(g)_{(K_m,K_{-1})}\leq& \sum_{k=0}^j(-1)^{j-k}\mathrm{Tr\,}H_k(g)_{(K_{m-1},K_{-1})}\\
     +& \sum_{k=0}^j(-1)^{j-k}\mathrm{Tr\,}H_k(g)_{(K_{m},K_{m-1})}\\
     \leq&  \sum_{k=0}^j(-1)^{j-k}\mathrm{Tr\,}H_k(g)_{(K_{m-2},K_{-1})}\\
     +& \sum_{k=0}^j(-1)^{j-k}\mathrm{Tr\,}H_k(g)_{(K_{m-1},K_{m-2})}\\
     +& \sum_{k=0}^j(-1)^{j-k}\mathrm{Tr\,}H_k(g)_{(K_{m},K_{m-1})}\\
     \leq& \cdots\\
     \leq& \sum_{k=0}^{j}(-1)^{j-k}\text{Local $k$-trace of $g^l$}
 \end{align*}

To end the proof of Equation \eqref{eq:strong}, observe that 
$$\sum_{k=0}^j(-1)^{j-k}\mathrm{Tr\,}H_k(g)_{(K_m,K_{-1})}=\sum_{k=0}^{j}(-1)^{j-k}\mathrm{Tr} \, H_k(g^l)$$ and that Lemma \ref{lemma:key} guarantees the equality for $j=n$. 
Equation \eqref{eq:weak} is deduced from adding two consecutives values for $j$ in Equation \eqref{eq:strong}. 
\end{proof}

We refer to Equation \eqref{eq:strong} as the Strong Morse-Bott inequalities for endomorphisms and to  Equation \eqref{eq:weak} as the Weak Morse-Bott inequalities for endomorphisms. 

\begin{remark}
    Observe that the the equality of Equation {\eqref{eq:strong}} for $j=n$ provides a localization of the Lefschetz number at the critical objects (critical simplices and periodic orbits) of the combinatorial vector field.
\end{remark}

\begin{remark}
    The compatibility condition between the Morse-Bott dynamics and the map $g$, $f(g(\sigma))\leq f(\sigma)$ for every simplex $\sigma$ of $K$, means that $g$ respects the direction of the flowlines of the Morse-Bott dynamics in $K$. 
\end{remark}

\begin{remark}
    Taking $g\colon K \to K$ to be the identity and $f$ a Liapunov function obtained from integrating a Morse-Smale combinatorial vector field we obtain as a particular case the Strong and Weak Morse-Bott inequalities (\cite[Theorem 3.1]{Forman_Bott}).
\end{remark}

\begin{example}
Consider the simplicial complex $K$ depicted in Figure \ref{fig:complexo} with the order: $v_0\leq v_1\leq v_2\leq v_5\leq v_4\leq v_3$, the simplicial map $g\colon K\to K$ given by: 
\begin{align*}
    & g(v_0)=v_0, \quad g(v_1)=v_1, \quad g(v_2)=v_2,\\
    & g(v_3)=v_2, \quad g(v_4)=v_1, \quad g(v_5)=v_0;
\end{align*}%$g(v_0)=v_0$, $g(v_1)=v_1$, $g(v_2)=v_2$. $g(v_3)=v_2$, $g(v_4)=v_1$ and $g(v_5)=v_0$, 
and the Forman-Morse-Bott function $f\colon K\to \{0,1,2,3,4,5,6\}$ given by:
\begin{align*}
    &f(v_0)=1, \quad &f(v_1)=2, \qquad &f(v_2)=0, \qquad &f([v_0,v_1])=3,\\
    &f(v_4)=4, \quad &f(v_5)=4, \qquad &f(v_3)=4, \qquad &f([v_0,v_2])=1,\\
    &f([v_1,v_2])=2,  &f([v_2,v_3])=5, \qquad &f([v_3,v_4])=4, \qquad &f([v_4,v_5])=3,\\
    &f([v_3,v_5])=3, \quad & & &f([v_0,v_1,v_2])=6.
\end{align*}

%$f(v_0)=1$, $f(v_1)=2$, $f(v_2)=0$, $f(v_3)=4$, $f(v_4)=4$, $f(v_5)=4$, 

We exhibit the computations of the local \(k\)-traces of \(g\) for each \(k\). For the sake of clarity of the presentation, we omit those terms in the filtration for which \(\mathrm{Tr\,} H_k(g)(K_{i+1},K_i)=0\).
\[
\begin{aligned}
\text{local $0$-trace of } g
&= \mathrm{Tr}\, H_0(g)_{(K_0,\emptyset)}=1.
\end{aligned}
\]
\[
\begin{aligned}
\text{local $1$-trace of } g
&= \mathrm{Tr}\, H_1(g)_{(K_3,K_2)}=1.
\end{aligned}
\]
\[
\begin{aligned}
\text{local $2$-trace of } g
&= \mathrm{Tr}\, H_2(g)_{(K_6,K_5)}=1.
\end{aligned}
\]
\begin{figure}[h!]
    \centering
\includegraphics[width=0.75\linewidth]{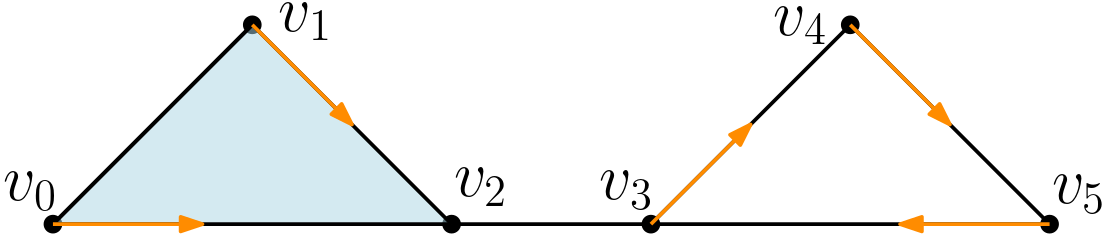}
    \caption{Simplicial complex $K$ and combinatorial Morse-Smale combinatorial vector field.}
    \label{fig:complexo}
\end{figure}
We introduce two tables comparing the Strong (respectively Weak) Morse-Bott inequalities with the Strong (respectively Weak) Morse-Bott inequalities for endomorphisms.
    \begin{table}[h!]
    \centering
    \begin{tabular}{|c|c|c|}
        \hline
        \multicolumn{3}{|c|}{\textbf{Strong Inequalities}} \\
        \hline
        & Morse-Bott & Morse-Bott for the endomorphism $g$ \\
        \hline
        $j=0$ & $1 \leq 2$ & $1 \leq 1$ \\
        $j=1$ & $0 \leq 1$ & $-1 \leq 0$ \\
        $j=2$ & $0 \leq 0$ & $1 \leq 1$ \\
        \hline
    \end{tabular}
    \caption{Strong Morse-Bott inequalities for the endomorphism $g$.}
\end{table}

\begin{table}[h!]
    \centering
    \begin{tabular}{|c|c|c|}
        \hline
        \multicolumn{3}{|c|}{\textbf{Weak Inequalities}} \\
        \hline
        & Morse-Bott & Morse-Bott for the endomorphism $g$\\
        \hline
        $j=0$ & $1 \leq 2$ & $1 \leq 1$ \\
        $j=1$ & $1 \leq 3$ & $0\leq 1$ \\
        $j=2$ & $0 \leq 1$ & $0 \leq 1$ \\
        \hline
    \end{tabular}
    \caption{Weak Morse-Bott inequalities for the endomorphism $g$.}
\end{table}

\end{example}

%\bibstyle{sn-mathphys}
\bibliographystyle{plain}
\bibliography{biblio}% common bib file HACER ESTE FICHERO

%% if required, the content of .bbl file can be included here once bbl is generated
%\input sn-article.bbl

%% Default %%
%%\input sn-sample-bib.tex%

\end{document}